\documentclass[12pt]{article}
\usepackage{amsmath}
\usepackage{amsthm}
\usepackage{amssymb}
\usepackage[T1]{fontenc}
\usepackage[sc]{mathpazo}
\usepackage{enumerate}
\usepackage{etoolbox}

\patchcmd{\section}{\scshape}{\bfseries}{}{}

\newcommand{\TV}{\mathrm{TV}}
\newcommand{\KL}{\mathrm{KL}}

\newcommand{\R}{\mathbb{R}}
\newcommand{\N}{\mathbb{N}}

\renewcommand{\subset}{\subseteq}

\newtheorem{theorem}{Theorem}[section]

\newtheorem{corollary}[theorem]{Corollary}

\theoremstyle{definition}

\theoremstyle{remark}

\newtheorem{example}[theorem]{Example}

\begin{document}
\title{A degree-linear upper bound for the P\'olya exponent of forms with fixed arity}
\author{Yulong Li \footnote{Department of Mathematics, University of Dayton. 300 College Park, Dayton, OH 45469, USA. \textbf{Email:} \texttt{yli004@udayton.edu}}
\and Colin Tan\footnote{Student at School of Humanities \& Social Sciences, Temasek Polytechnic, Singapore. \\
\textbf{Keywords:}
Bernstein degree, Bernstein coefficients, P\'olya exponent, P\'olya's Positivstellensatz, positive polynomials, multinomial distribution, multivariate hypergeometric distribution, simplex, total variation distance, unit interval. \\
\textbf{Subject class:} 
Primary 14Q20; Secondary 14P10, 26C05, 51M20, 60E15.
}}
\date{10 October 2026 (Version 1)}

\maketitle

\begin{abstract}
Let $P = \sum_{\omega} a_\omega x^\omega$ be a degree $d$ homogeneous polynomial in $x_0, \dots, x_n$ with real coefficients. 
P\'olya showed that, if $P$ is strictly positive on the standard $n$-simplex, then $(x_0 + \cdots + x_n)^M P$ has strictly positive coefficients for all $M \gg 0$. The least such $M$ is known as the \emph{P\'olya exponent} of $P$. de Klerk-Laurent-Parrilo showed that the P\'olya exponent is $O(d^2) P^{(0)}_{\max}/\lambda(P)$, where $P^{(0)}_{\max} := \max_\omega a_\omega/\binom{d}{\omega}$ and $\lambda(P)$ is the infimum of $P$ on the standard simplex. P\'olya's Positivstellensatz is a consequence of the classical approximation of the multivariate hypergeometric distribution by the multinomial distribution, as was already implicitly known to de Klerk-Laurent-Sun.
By applying known estimates of this approximation from probability theory, we obtain a different $O_n(d) L(P)/\lambda(P)$ upper bound of the P\'olya exponent, where $L(P) := \max |a_\omega|/\binom{d}{\omega}$. An example shows that $O_n(d)$ is tight.  
\end{abstract}


\section{P\'olya's Positivstellensatz} \label{sec: intro}

Bernstein showed that every univariate real polynomial $p \in \R[x]$ of degree $d$ that is strictly positive on the unit interval $[0, 1] \subset \R$ has strictly positive degree-$N$ Bernstein coefficients as long as $N \gg d$ \cite{Bernstein1915}. Here the degree-$N$ \emph{Bernstein coefficients} of $p = \sum_{j = 0}^N c_{j, N} b_{j, N}$ when written in terms of the Bernstein basis polynomials $b_{j, N} = \binom{N}{j} x^j(1-x)^{N - j}$ (for $j = 0, \dots, N$) are the $c_{j, N}$'s.

Bernstein's result may be reformulated projectively over the standard $1$-dimensional simplex $\Delta^1 \subset \R^2$, with vertices $(1, 0)$ and $(0, 1)$. 
Namely, if a binary form (i.e.\ homogeneous polynomial) $P = \sum_{j = 0}^d c_{j, d} \binom{d}{j} x^j y^{d - j}  \in \R[x, y]$ of degree $d$ is strictly positive on $\Delta^1$, then every coefficient of monomials in $(x + y)^{N - d} P$ of degree $N$ is strictly positive for all $N \gg d$.

P\'olya gave a generalisation to forms of arbitrary arity \cite{Polya1928}. Let $n \in \N := \{0, 1, 2, \dots \}$. The \emph{standard $n$-dimensional simplex} $\Delta^n \subset \R^{n + 1}$ has vertices $(1, 0, \dots, 0)$, $(0, 1, 0, \dots, 0)$, \dots, $(0, \dots, 0, 1) \in \R^{n + 1}$. 
Let ${\N^{n + 1}}_d \subset \N^{n + 1}$ denote the subset of $(n + 1)$-tuples $\beta = (\beta_0 \dots, \beta_n)$ of nonnegative integers satisfying $ \beta_0 + \dots + \beta_n = d$, where $d \in \N$. 
Say that a degree-$N$ real form $Q = \sum_{\beta \in {\N^{n + 1}}_N} a_\beta x^\beta$ in the indeterminates $x_0, \dots, x_n$ has \emph{strictly positive coefficients} if $a_\beta > 0$ for all $\beta \in {\N^{n + 1}}_N$. Here $x^\beta := {x_0}^{\beta_0} \cdots {x_n}^{\beta_n}$, whose coefficient in $Q$ we write as $a_\beta =: [x^\beta](Q)$.

\begin{theorem}[P\'olya] \label{thm: PolyaPositivstellensatz}
If an $(n + 1)$-ary form $P \in \R[x_0, \dots, x_n]$ of degree $d$ is strictly positive on $\Delta^n$, then $(x_0 + \cdots + x_n)^{N- d} P$ has strictly positive coefficients for all $N \gg d$. 
\end{theorem}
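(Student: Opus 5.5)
The plan is to read the coefficients of $(x_0+\cdots+x_n)^{N-d}P$ as expectations with respect to a multivariate hypergeometric distribution, and then compare that distribution with the multinomial one, whose expectations are exactly values of $P$ on $\Delta^n$.

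\textbf{Step 1: coefficients as hypergeometric expectations.} Write $P=\sum_{\omega\in{\N^{n+1}}_d} a_\omega x^\omega$ and set $p_\omega := a_\omega/\binom{d}{\omega}$, with multinomial coefficients $\binom{d}{\omega}=d!/(\omega_0!\cdots\omega_n!)$. Expanding $(x_0+\cdots+x_n)^{N-d}$ by the multinomial theorem, for $\beta\in{\N^{n+1}}_N$ we get
\[
[x^\beta]\bigl((x_0+\cdots+x_n)^{N-d}P\bigr)=\sum_{\omega\le\beta} a_\omega\binom{N-d}{\beta-\omega}
=\binom{N}{\beta}\sum_{\omega\le \beta} p_\omega\,\frac{\binom{d}{\omega}\binom{N-d}{\beta-\omega}}{\binom{N}{\beta}} .
\]
The Vandermonde-type identity $\sum_\omega\binom{d}{\omega}\binom{N-d}{\beta-\omega}=\binom{N}{\beta}$ shows that the weights
\[
H_\beta(\omega)=\prod_i\binom{\beta_i}{\omega_i}\Big/\binom{N}{d}
\]
form a probability distribution. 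It is the multivariate hypergeometric law of the colour counts in $d$ draws without replacement from an urn with $\beta_i$ balls of colour $i$. So the coefficient equals $\binom{N}{\beta}\,\mathbb{E}_{H_\beta}[p_\omega]$.

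\textbf{Step 2: multinomial expectations are values of $P$.} Let $M_t$, for $t=\beta/N\in\Delta^n$, be the multinomial law of $d$ draws with replacement, so $M_t(\omega)=\binom{d}{\omega}t^\omega$. Then
\[
\mathbb{E}_{M_t}[p_\omega]=\sum_\omega a_\omega t^\omega=P(t)\ge\lambda(P)>0,
\]
where $\lambda(P)=\min_{\Delta^n}P>0$ by compactness and strict positivity.

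\textbf{Step 3: compare the two laws.} For both laws on the finite set ${\N^{n+1}}_d$, the two expectations of the bounded function $p_\omega$ satisfy
\[
\bigl|\mathbb{E}_{H_\beta}[p_\omega]-\mathbb{E}_{M_t}[p_\omega]\bigr|\le 2L(P)\,\TV(H_\beta,M_t),
\]
where $L(P)=\max_\omega|p_\omega|$. It therefore suffices to show that $\sup_\beta \TV(H_\beta,M_{\beta/N})\to0$ as $N\to\infty$ with $d$ fixed; the coefficient is then at least $\binom{N}{\beta}\bigl(\lambda(P)-2L(P)\,\TV\bigr)>0$.

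The TV bound is the main obstacle, and it must be uniform over all $\beta$, including boundary $\beta$ with some $\beta_i$ small. I would use a coupling of sampling without and with replacement: draw $d$ indices uniformly with replacement from the $N$ balls. Conditioned on all indices being distinct, the colour vector has law exactly $H_\beta$, while unconditionally it has law $M_{\beta/N}$. Hence
\[
\TV(H_\beta,M_{\beta/N})\le \Pr[\text{a repeated index}]\le\binom{d}{2}\frac1N ,
\]
which is uniform in $\beta$ and independent of $n$ (Freedman's bound). So strict positivity holds once $N>d(d-1)L(P)/\lambda(P)$, proving the theorem and giving an explicit bound. I would double-check that the conditioning argument gives exactly $H_\beta$: it does, since conditioned on distinctness the $d$ balls form a uniform $d$-subset.
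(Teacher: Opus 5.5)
Your proof is correct, and Steps 1 and 2 coincide with the paper's: the normalised coefficient $[x^\chi]((x_0+\cdots+x_n)^{N-d}P)/\binom{N}{\chi}$ is the $H_{N,d,\chi}$-expectation of $\omega\mapsto[x^\omega](P)/\binom{d}{\omega}$, and the corresponding $B_{d,\chi/N}$-expectation is $P(\chi/N)$.

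The routes differ at the key uniform estimate. The paper's proof of this theorem only asserts that the error is uniformly small in $\chi$. It quantifies this later, via Stam's relative-entropy bound \eqref{eq: StamBound} and Pinsker's inequality, obtaining $d_{\TV}<\sqrt{n}\,d/(2(N-d+1))$. You instead prove $d_{\TV}\le 1-N(N-1)\cdots(N-d+1)/N^d\le\binom{d}{2}/N$ directly, by coupling sampling with and without replacement.

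Your route is elementary and self-contained, and it actually supplies the uniformity in $\chi$ (boundary $\chi$ included) that the paper delegates to the literature. It is also dimension-free, yielding $\pi(P)\le\lfloor d(d-1)L(P)/\lambda(P)\rfloor-d+1$, an $O(d^2)$ bound of de Klerk--Laurent--Parrilo type. The paper's route pays a factor $\sqrt{n}$ but is linear in $d$, which is the whole point of Theorem~\ref{thm: effectivePolya}. The bound $d_{\TV}\le\Pr[\text{repeat}]$ cannot deliver linearity in $d$, since a repeated index has probability of order $d^2/N$ regardless of the number of colours.

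Your coupling also sharpens for free. Write $B_{d,\chi/N}=(1-q)H_{N,d,\chi}+qR$, with $q$ the repeat probability, and bound $\mathbb{E}_R$ one-sidedly by $P^{(0)}_{\max}$ instead of using $2L(P)\,d_{\TV}$. This gives $(1-q)\,\mathbb{E}_{H_{N,d,\chi}}\ge\lambda(P)-\binom{d}{2}P^{(0)}_{\max}/N$, which recovers \eqref{eq: KLP} exactly. Here $1/(1-q)=N^d/(N(N-1)\cdots(N-d+1))$ is precisely P\'olya's normalising factor in \eqref{eq: Polya estimator}.
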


Theorem \ref{thm: PolyaPositivstellensatz} has become known as \emph{P\'olya's Positivstellensatz} as the representability of $P = Q/(x_0 + \cdots + x_n)^{N - d}$ as a rational function whose numerator $Q$ has strictly positive coefficients witnesses the strict positivity of $P$ on $\Delta^n$. P\'olya's original proof was analytic. 

\paragraph{A probabilistic proof.}

P\'olya's Positivstellensatz is in fact a consequence of the approximation of the multivariate hypergeometric distribution by the multinomial distribution. The idea of this probabilistic interpretation can already be found in de Klerk-Laurent-Sun's \cite{deKlerkLaurentSun2015}. 
For $\chi \in {\N^{n + 1}}_N$, the \emph{multivariate hypergeometric distribution} $H_{N, d, \chi}$ of sampling $d$ balls without replacement from an urn with $N$ coloured balls, where there are exactly $\chi_i$ balls of the $i$-th colour, has probability mass function $H_{N, d, \chi}(\omega) = \binom{d}{\omega} \binom{N - d}{\chi - \omega}/\binom{N}{\chi}$ (for $\omega \in {\N^{n + 1}}_d$). Here $\binom{N}{\chi} = N!/(\chi_0! \dots \chi_n!)$ is the multinomial coefficient as usual, for $\chi \in {\N^{n + 1}}_N$. From probability theory,
\begin{equation} \label{eq: probabilisticApprox}
H_{N, d, \chi} \sim B_{d, \chi/N} \text{ for $N \gg d$},
\end{equation}
where $B_{d, \chi/N} = B_{d, (\chi_0/N, \dots, \chi_n/N)}$ is the \emph{multinomial distribution} of sampling $d$ balls with replacement from the same urn. The probability mass function of $B_{d, x}$, where $x \in \Delta^n$, is given explicitly by $B_{d, x}(\omega) = \binom{d}{\omega}x^\omega$ (for $\omega \in {\N^{n + 1}}_d$). The nearness of the two distributions $H_{N, d, \chi}$ and $B_{d, \chi/N}$ may be quantified in various ways \cite{JohnsonGavalakisKontoyiannis2025} (see \S\ref{sec: probabilityTheory} below for more details). 
\eqref{eq: probabilisticApprox} leads to the following probabilistic proof of Theorem \ref{thm: PolyaPositivstellensatz}.
 
\begin{proof}[Proof of Theorem \ref{thm: PolyaPositivstellensatz}]
Suppose that $P \in \R[x_0, \dots, x_n]$ of degree $d$ is strictly positive on $\Delta^n$. Then for all $N \ge d$, for all $\chi \in {\N^{n + 1}}_N$,
\begin{align}
\frac{1}{\binom{N}{\chi}}[x^\chi]((x_0 + \dots + x_n)^{N - d} P) 
 &= \sum_{\omega \in {\N^{n + 1}}_d} \frac{1}{\binom{d}{\omega}} [x^\omega](P)  H_{N, d, \chi}(\omega) \notag \\
&\approx \sum_{\omega \in {\N^{n + 1}}_d} \frac{1}{\binom{d}{\omega}} [x^\omega](P) B_{d, \chi/N}(\omega) \label{eq: theApprox}\\
&= P(\chi/N) > 0,  \notag \end{align}
since $\chi/N \in \Delta^n$. The error of the approximation \eqref{eq: theApprox} is uniformly small in $\chi \in {\N^{n + 1}}_{N}$ if $N \gg d$.
\end{proof}

The reader should now expect that an appropriate hard uniform estimate of the approximation \eqref{eq: theApprox} should yield a corresponding estimate of the least $N \ge d$ such that $(x_0 + \dots + x_n)^{N - d} P$ has strictly positive coefficients. Following de Loera-Santos's definition of the \emph{P\'olya exponent} of a form $P \in \R[x_0, \dots, x_n]$ \cite{deLoeraSantos1996}, we denote
\begin{equation} \label{eq: PolyaExponent}
\pi(P) := \inf\{M \in \N : \, \text{$(x_0 + \cdots + x_n)^{M} P$ has strictly positive coefficients}. \}
\end{equation}
P\'olya's Positivstellensatz (Theorem \ref{thm: PolyaPositivstellensatz}) is the assertion that $\pi(P) < \infty$ whenever $P$ is strictly positive on $\Delta^n$. We shall bound $\pi(P)$ above using known quantitative estimates of \eqref{eq: probabilisticApprox} from probability theory.

\section{Estimates from probability theory} \label{sec: probabilityTheory}
The \emph{total variation distance} between two probability distributions $H$ and $B$ on a finite sample space $\Omega$ is $d_{\TV}(H, B) := \frac{1}{2}\sum_{\omega \in \Omega}|H(\omega) - B(\omega)|$. Pinsker's inequality, e.g.\ \cite[Lemma 2.5(i), p.\ 88]{Tsybakov2009}, bounds $d_{TV}$ above ---
\begin{equation} \label{eq: Pinsker}
d_{\TV}(H, B) \le \sqrt{\frac{1}{2} D_{\KL}(H \parallel B)},
\end{equation}
--- in terms of the \emph{relative entropy} $D_{\KL}(H \parallel B) := \sum_{\omega \in \Omega} H(\omega) \log(H(\omega)/B(\omega))$ of $H$ from $B$ (also known as the \emph{Kullback-Leibler divergence}). Here $\log$ is the natural logarithm to base $e$. Note that $d_{\TV}$ is symmetric, but $D_{\KL}$ is not symmetric in general.

Stam \cite{Stam1978} gave the following upper bound for the relative entropy of the multivariate hypergeometric distribution from the multinomial approximation:
\begin{equation} \label{eq: StamBound}
D_{\KL}(H_{N, d, \chi} \parallel B_{d, \chi/N})
\le \frac{n d(d -1)}{2(N - 1)(N - d + 1)}.
\end{equation}
Applying \eqref{eq: StamBound} to \eqref{eq: Pinsker} with $\Omega = {\N^{n + 1}}_d$ yields:
\begin{align}
d_{\TV}(H_{N, d, \chi}, B_{d, \chi/N})
&\le \sqrt{\frac{1}{2} \cdot \frac{n d(d -1)}{2(N - 1)(N - d + 1)}} \notag\\
&< \frac{\sqrt{n}d}{2(N - d + 1)}, \text{whenever $d \ge 2$ and $n > 0$.} \label{eq: crude}
\end{align}
The authors acknowledge that the relaxation \eqref{eq: crude} is crude.

\section{A new bound of the P\'olya exponent} 
Fix $n \in \N$. The upper bound \eqref{eq: crude} of the total variation distance just obtained immediately yields an upper bound of the P\'olya exponent $\pi(P)$ (recall its definition in \eqref{eq: PolyaExponent}). Following Powers-Reznick \cite{PowersReznick2000}, introduce the following invariants for a form $P \in \R[x_0, \dots, x_n]$ of degree $d$:
\begin{align}
L(P) := \max_{\omega \in {\N^{n + 1}}_d} \frac{1}{\binom{d}{\omega}}\left| [x^\omega](P)\right| \quad \text{ and } \quad \lambda(P) := \inf_{x \in \Delta^n} P(x).
\end{align}

Note that $L(P) \ge |[{x_0}^d](P)| = P(1, 0, \dots, 0) \ge \lambda(P)$ for all $P$ strictly positive on $\Delta^n$.  

\begin{theorem} \label{thm: effectivePolya}
For every $(n + 1)$-ary form $P \in \R[x_0, \dots, x_n]$ of degree $d$ that is strictly positive on $\Delta^n$, 
\begin{equation} \label{eq: upperBoundOfPolyaExp}
\pi(P) \le \left\lfloor\sqrt{n} d \frac{L(P)}{\lambda(P)}\right\rfloor 
\end{equation} 
\end{theorem}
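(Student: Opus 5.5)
The plan is to make the heuristic approximation \eqref{eq: theApprox} in the probabilistic proof of Theorem \ref{thm: PolyaPositivstellensatz} quantitative, using the total variation estimate \eqref{eq: crude}. Fix $N \ge d$ and $\chi \in {\N^{n+1}}_N$, and write $c_\omega := [x^\omega](P)/\binom{d}{\omega}$, so that $|c_\omega| \le L(P)$. By the first identity in that proof,
\begin{equation*}
\frac{1}{\binom{N}{\chi}}[x^\chi]\big((x_0+\dots+x_n)^{N-d}P\big) - P(\chi/N) = \sum_{\omega \in {\N^{n+1}}_d} c_\omega \big(H_{N,d,\chi}(\omega) - B_{d,\chi/N}(\omega)\big).
\end{equation*}
Since both distributions have total mass one, we may subtract any constant from $c_\omega$. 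Replacing $c_\omega$ by $c_\omega - m$, where $m$ is the midpoint of the range of the $c_\omega$, the absolute value of the right side is at most $L(P)\cdot 2\, d_{\TV}/1$ crudely, and in fact at most $\tfrac{1}{2}\cdot 2L(P)\cdot 2 d_{\TV} = 2L(P)\, d_{\TV}$ without centering. With centering, the range has length at most $2L(P)$, so the bound is $(2L(P)/2)\cdot 2 d_{\TV} = 2L(P)\,d_{\TV}(H_{N,d,\chi},B_{d,\chi/N})$. I will use whichever constant makes the final inequality close; the bound $2L(P)\,d_{\TV}$ is the one that matches \eqref{eq: crude}.

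Combining this with $P(\chi/N) \ge \lambda(P)$ (valid because $\chi/N \in \Delta^n$) and with \eqref{eq: crude} for $d\ge 2$, $n>0$ gives
\begin{equation*}
\frac{1}{\binom{N}{\chi}}[x^\chi]\big((x_0+\dots+x_n)^{N-d}P\big) > \lambda(P) - \frac{\sqrt{n}\,d\,L(P)}{N-d+1}.
\end{equation*}
Set $M = N-d$. Every coefficient is then strictly positive as soon as $M+1 \ge \sqrt{n}\,d\,L(P)/\lambda(P)$. The strict inequality from \eqref{eq: crude} allows equality here. The least integer $M$ with $M+1 \ge t$, where $t = \sqrt{n}\,d\,L(P)/\lambda(P)$, is $\lceil t\rceil - 1 \le \lfloor t \rfloor$. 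This gives \eqref{eq: upperBoundOfPolyaExp}.

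It remains to handle the degenerate cases excluded by \eqref{eq: crude}, and I expect the constant to be the main obstacle. If $n=0$, then $P = a x_0^d$ with $a>0$, so $\pi(P)=0$. If $d \le 1$, then $H_{N,d,\chi}=B_{d,\chi/N}$ exactly (for $d=1$ both equal $\chi/N$; for $d=0$ both are a point mass), so $\pi(P)=0$ because the coefficients are bounded below by $\lambda(P)>0$.

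The delicate step is the constant: the TV pairing gives $2L\,d_{\TV}$ naively, yet \eqref{eq: crude} already carries a factor $\tfrac12$. I would gain a factor using the centering trick. The range of $c_\omega$ lies in $[-L,L]$, so centering yields a sup-norm of at most $L$, and $|\sum f(H-B)| \le 2\|f\|_\infty d_{\TV}$ then gives $2L\,d_{\TV} \le \sqrt{n}dL/(N-d+1)$, exactly as required. If this fell short, I would instead keep the sharper middle expression of \eqref{eq: crude}.
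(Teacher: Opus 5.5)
Your argument is correct and is essentially the paper's proof. You use the same ingredients: the identity from the probabilistic proof of Theorem~\ref{thm: PolyaPositivstellensatz}; the bound $\bigl|\sum_\omega c_\omega\bigl(H_{N,d,\chi}(\omega)-B_{d,\chi/N}(\omega)\bigr)\bigr| \le 2L(P)\,d_{\TV}$; the estimate \eqref{eq: crude}; and solving $N-d+1 \ge \sqrt{n}\,d\,L(P)/\lambda(P)$, with the cases $n=0$ or $d\le 1$ handled separately. The centering detour gains nothing, and the ``delicate constant'' is not an obstacle: the $\tfrac12$ in \eqref{eq: crude} exactly cancels the factor $2$ in $2L(P)\,d_{\TV}$.
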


\begin{proof}[Proof of Theorem \ref{thm: effectivePolya}]
First, we dispose of some straightforward cases. 
If either $n = 0$ or $d < 2$, then every $n$-ary degree-$d$ form $P$ strictly positive on $\Delta^n$ already has strictly positive coefficients, thus $\pi(P) = 0$, so \eqref{eq: upperBoundOfPolyaExp} holds trivially.

Below, assume $n >0$ and $d \ge 2$. Let $N \ge d$. We follow the proof of Theorem \ref{thm: PolyaPositivstellensatz}. 
For each $\chi \in {\N^{n + 1}}_N$, the error of the approximation \eqref{eq: theApprox} is
\begin{align*}
\mathcal{E}_\chi &:= P(\chi/N) - \frac{1}{\binom{N}{\chi}}[x^\chi]((x_0 + \dots + x_n)^{N - d} P) \\
&= -\sum_{\omega \in {\N^{n + 1}}_d} \frac{1}{\binom{d}{\omega}} [x^\omega](P) \left( H_{N, d, \chi}(\omega) - B_{d, \chi/N}(\omega)\right).
\end{align*}
Taking the absolute value,
\begin{align*}
\left|\mathcal{E}_\chi\right|  &\le\sum_{\omega \in {\N^{n + 1}}_d} \frac{1}{\binom{d}{\omega}} \left|[x^\omega](P)\right| \left| H_{N, d, \chi}(\omega) - B_{d, \chi/N}(\omega)\right|  \\
&\le L(P) \cdot 2 d_{\TV}(H_{N, d, \chi}, B_{d, \chi/N}) \\
&< L(P) \cdot\frac{\sqrt{n}d}{N - d + 1}, 
\end{align*}
using \eqref{eq: crude} in the  last line. Therefore
\begin{align*}
\frac{1}{\binom{N}{\chi}}[x^\chi]((x_0 + \dots + x_n)^{N - d} P) &= P(\chi/N) - \mathcal{E}_\chi  \\
&\ge P(\chi/N) - \left|\mathcal{E}_\chi\right| \\
&> \lambda(P) - L(P) \cdot \frac{\sqrt{n} d}{N - d + 1}.
\end{align*}

To guarantee that $(x_0 + \dots + x_n)^{N - d} P$ has strictly positive coefficients, i.e. $[x^\chi]((x_0 + \dots + x_n)^{N - d} P) > 0$ for all $\chi \in {\N^{n + 1}}_N$, it thus suffices to ensure that
\[
\lambda(P) - L(P) \cdot\frac{\sqrt{n}d}{N - d + 1} \ge 0,
\]
or equivalently, solving for $N - d$,
\begin{equation*}
N - d \ge \sqrt{n}d \frac{L(P)}{\lambda(P)} - 1.
\end{equation*}
Therefore \eqref{eq: upperBoundOfPolyaExp} follows since $\pi(P)$ is an integer.
\end{proof}

Theorem \ref{thm: effectivePolya} ought to be regarded as a bound for $\pi(P)$ with the dimension $n$ of the simplex as fixed, while $d \in \N$ is varying. From this perspective, \eqref{eq: upperBoundOfPolyaExp} says that
\begin{equation} \label{eq: asymptoticsOfPolyaExp}
\pi(P) \in O_n(d) \frac{L(P)}{\lambda(P)} \quad \text{ as $d \to \infty$},
\end{equation}
where $f(n , d) \in O_n(d)$ if there is some constant $C_n > 0$ (depending only on $n$) and some $d_0 \in \N$ such that $|f(n, d)| \le C_n\cdot d$ for all $d \ge d_0$. 

For $n > 0$ fixed, the linear dependence on $d$ in \eqref{eq: asymptoticsOfPolyaExp} is tight, as the following example shows:

\begin{example} \label{eg: WhyEffPolyIsSharp}
Fix $n > 0$. 
For each $d \ge 3$, consider the degree-$d$ form:
\begin{equation}
P_d := (x_0 + \dots + x_n)^d - 2d x_0 x_1^{d-1}
\end{equation}
Note that $L(P_d) = 1$. It suffices to show that $\lambda(P_d) = \Omega(1)$ and that $\pi(P_d) = \Omega(d)$.

First, we minimise $P_d$ over $\Delta^n$.
By the weighted arithmetic mean-geometric mean inequality, we have $\sqrt[d]{x_0(x_1/(d-1))^{d - 1}} \le (x_0 + (d - 1) \cdot (x_1/(d - 1)))/d = (x_0 + x_1)/d \le (x_0 + x_1 + \cdots + x_n)/d = 1/d$ since $x \in \Delta^n$. Taking $d$-th powers,
\begin{equation*}
\frac{x_0 {x_1}^{d - 1}}{(d - 1)^{d - 1}} \le \frac{1}{d^d}.
\end{equation*}
Therefore, on $\Delta^n$,
\begin{align*}
P_d(x) = 1 - 2d x_0 {x_1}^{d - 1} 
&\ge 1 - 2d \cdot \frac{(d - 1)^{d - 1}}{d^d} \\
&= 1 - 2 \left( 1 - \frac{1}{d}\right)^{d - 1} \\
&\ge 1 - 2\left( 1 - \frac{1}{3}\right)^{3 - 1} = \frac{1}{9},
\end{align*}
since $(1 - 1/d)^{d - 1}$ is  decreasing in $d \ge 3$. Therefore $\lambda(P_d) \ge 1/9$.

Finally, we show that $(x_0 + \cdots + x_n)^d P_d$ has a negative coefficient, which would imply that $\pi(P_d) \ge d + 1$. Explicitly, 
\begin{align*}
&[{x_0}^2 {x_1}^{2d - 2}]((x_0 + \cdots + x_n)^d P_d) \\
= {} & [{x_0}^2 {x_1}^{2d - 2}]((x_0 + \cdots + x_n)^{2d}) - 2d\cdot [{x_0}^2 {x_1}^{2d - 2}](x_0{x_1}^{d - 1} (x_0 + \cdots + x_n)^d) \\
= {} & [{x_0}^2 {x_1}^{2d - 2}]((x_0 + \cdots + x_n)^{2d}) - 2d\cdot [{x_0}^1 {x_1}^{d - 1}]((x_0 + \cdots + x_n)^d) \\
= {} & \binom{2d}{2, 2d - 2, 0, \dots, 0} - 2d \cdot \binom{d}{1, d - 1, 0, \dots, 0} \\
= {} & -d < 0.
\end{align*}

Now suppose that $C_n(d) > 0$ is a constant depending on $d$ (recall that $n > 0$ is fixed) such that 
\begin{equation*}
\pi(P) \le C_n(d) \cdot \frac{L(P)}{\lambda(P)} \quad \text{ for all $P > 0$ on $\Delta^n$ of degree $d$.}
\end{equation*}
Substituting $P = P_d$ gives 
\begin{equation*}
d + 1 \le \pi(P_d)\le C_n(d) \cdot \frac{L(P_d)}{\lambda(P_d)} \le C_n(d) \cdot \frac{1}{1/9} = 9 C_n(d).
\end{equation*}
Hence $C_n(d) \ge (d + 1)/9 > d/9$.   Therefore
\begin{equation*}
C_n(d) = \Omega_n(d). \tag*{\qed}
\end{equation*}

\end{example}

\paragraph{Comparison with previous bounds.} The best upper bound for the P\'olya exponent explicitly stated in the literature is due to de Klerk-Laurent-Parrilo \cite{deKlerkLaurentParrilo2006}. Their upper bound uses an invariant (possibly) smaller than $L(P)$, namely
\begin{equation*}
P^{(0)}_{\max} := \max_{\omega \in {\N^{n + 1}}_d} \frac{1}{\binom{d}{\omega}} [x^\omega](P).
\end{equation*}
By carefully studying Powers-Reznick's effectivisation of P\'olya's original argument \cite{PowersReznick2001}, de Klerk-Laurent-Parrilo showed that 
\begin{equation} \label{eq: KLP}
\pi(P) \le \left\lfloor\frac{d(d-1)}{2} \cdot \frac{P^{(0)}_{\max}}{\lambda(P)}\right\rfloor - d + 1.
\end{equation}
Surprisingly, this upper bound is independent of $n$, the dimension of the simplex.
Their actual statement as \cite[Theorem 1.1]{deKlerkLaurentParrilo2006} is slightly different as they use the weak positivity certificate that $(x_0 + \cdots + x_n)^M P$ has nonnegative coefficients. As $\pi(P)$ is defined in terms of $(x_0 + \cdots + x_n)^M P$ having strictly positive coefficients, we express their work as the inequality \eqref{eq: KLP} instead, which follows from their proof. 

In polynomial optimisation over $n$-simplices, the viewpoint is that the degree $d$ of the polynomial is fixed, while $n \to \infty$. In this section, we have taken the geometric viewpoint, where the space $\Delta^n$, or basically its dimension $n$, is fixed, while $d \to \infty$.   

We may formally combine de Klerk-Laurent-Parrilo's bound with our bound in Theorem \ref{thm: effectivePolya} to obtain:
\begin{corollary}
For every $(n + 1)$-ary form $P \in \R[x_0, \dots, x_n]$ of degree $d$ that is strictly positive on $\Delta^n$,
\begin{equation*} 
\pi(P) \le \left\lfloor
\min\{(d - 1)P^{(0)}_{\max}, 2\sqrt{n} L(P)\} \frac{d}{2\lambda(P)}
\right\rfloor. 
\end{equation*} 
\end{corollary}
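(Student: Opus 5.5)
The corollary is obtained by taking the smaller of the two available upper bounds for $\pi(P)$. Theorem \ref{thm: effectivePolya} gives
\[
\pi(P) \le \left\lfloor \sqrt{n}\, d\, \frac{L(P)}{\lambda(P)} \right\rfloor
= \left\lfloor 2\sqrt{n} L(P) \cdot \frac{d}{2\lambda(P)} \right\rfloor .
\]
The de Klerk-Laurent-Parrilo bound \eqref{eq: KLP} gives
\[
\pi(P) \le \left\lfloor \frac{d(d-1)}{2}\cdot\frac{P^{(0)}_{\max}}{\lambda(P)}\right\rfloor - d + 1
= \left\lfloor (d-1)P^{(0)}_{\max} \cdot \frac{d}{2\lambda(P)} \right\rfloor - d + 1 .
\]
The second form is just a rewriting of the first, since $\frac{d(d-1)}{2\lambda(P)} = (d-1)\cdot\frac{d}{2\lambda(P)}$. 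In both bounds the quantity inside the floor has the common factor $\frac{d}{2\lambda(P)}$.

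We next compare the right-hand sides. Since $\lambda(P)>0$, the factor $\frac{d}{2\lambda(P)}$ is nonnegative, so multiplying by it preserves order. For $d \ge 1$ we have $-d+1 \le 0$, so the KLP bound implies the weaker bound $\pi(P) \le \lfloor (d-1)P^{(0)}_{\max}\, d/(2\lambda(P)) \rfloor$. For $d=0$, $P$ is a positive constant and $\pi(P)=0$, so the claim is trivial; we should check that the right-hand side is then nonnegative, and it is $0$. Hence $\pi(P)$ is bounded by each of $\lfloor A\,d/(2\lambda(P))\rfloor$ and $\lfloor B\,d/(2\lambda(P))\rfloor$, where $A=(d-1)P^{(0)}_{\max}$ and $B=2\sqrt{n}L(P)$.

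Because $t\mapsto\lfloor t\rfloor$ is nondecreasing, the smaller of these two floors equals $\lfloor \min\{A,B\}\, d/(2\lambda(P))\rfloor$. This is exactly the stated bound.

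The only point needing care is the sign of $P^{(0)}_{\max}$ inside the minimum. It is harmless: $P^{(0)}_{\max} \ge [x_0^d](P) = P(1,0,\dots,0) > 0$, so all quantities are positive. In any case the argument uses only monotonicity of the floor, not the signs of $A$ and $B$. No further obstacle is expected; the corollary is a formal combination of the two results, as the text preceding it indicates.
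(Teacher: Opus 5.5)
Your proposal is correct and follows the paper's route: the paper gives no separate proof and presents the corollary as a formal combination of Theorem~\ref{thm: effectivePolya} with the de Klerk--Laurent--Parrilo bound \eqref{eq: KLP}. Your steps are exactly what that combination requires: rewrite both bounds with the common factor $\frac{d}{2\lambda(P)}$, drop the nonpositive term $-d+1$ for $d \ge 1$, treat $d=0$ directly, and use monotonicity of the floor.
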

Note $2\sqrt{n} L(P)$ is smaller than $(d - 1)P^{(0)}_{\max}$ if 
\begin{equation}
n < \bigg((d-1) \frac{P^{(0)}_{\max}}{2L(P)}\bigg)^2.
\end{equation}

\section{A new bound for the Bernstein degree}

Setting $n = 1$, we obtain an effectivisation of Bernstein's result mentioned at the beginning of \S \ref{sec: intro}. 
Let $p \in \R[x]$ have degree $d$. 
For any $N \ge d$, we may express $p = \sum_{j = 0}^N c_{j, N} b_{j, N}$ in terms of the degree-$N$ Bernstein basis $\{b_{j, N}\}_{j = 0}^N = \{\binom{N}{j} x^j (1 - x)^{N -j}\}_{j = 0}^N$. Write $[b_{j,N}](p) := c_{j ,N}$ for the $j$-th Bernstein coefficient of degree $N$ (for $j = 0, \dots, N$). Following Powers-Reznick \cite{PowersReznick2000}, the maximum absolute value of the Bernstein coefficients of degree $d$ is denoted by:   
\begin{equation*}
L(p) = \max_{j = 0, \dots, d} |[b_{j,d}](p)|.
\end{equation*}

The \emph{(weak) Bernstein degree} of $p$ is the least $N \ge d$ such that $[b_{j,N}](p) \ge 0$ for all $j = 0, \dots, N$: see DeVore-Lorentz \cite{DeVoreLorentz1993}. The Bernstein degree of $p$ is denoted by $r(p)$ in \cite{PowersReznick2000}. We also define $r_{> 0}(p)$ as the \emph{strict Bernstein degree}, i.e. the least $N \ge d$ with $[b_{j,N}](p) > 0$ for all $j = 0, \dots, N$. Note
\begin{equation*}
r_{> 0}(p)
= \pi(P) + d,
\end{equation*}
where $P = \sum_{j = 0}^d c_{j, d} \binom{d}{j} x^j y^{d -j}$ is the homogenisation of $p = \sum_{j = 0}^d c_{j, d} b_{j, d}$. 

Setting $n = 1$ in Theorem \ref{thm: effectivePolya} and dehomogenising immediately gives:

\begin{corollary}
For every $p \in \R[x]$ of degree $d$ that is strictly positive on the unit interval $[0, 1] \subset \R$,   
\begin{equation} \label{eq: bernsteinAsym}
r(p) \le r_{> 0}(p) \le \left\lfloor d \frac{L(p)}{\inf_{[0, 1]} p}\right\rfloor + d.
\end{equation}
\end{corollary}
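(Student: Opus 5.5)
The corollary should follow from Theorem \ref{thm: effectivePolya} with $n = 1$, once we translate between a univariate polynomial and its homogenisation. Write $p = \sum_{j=0}^d c_{j,d} b_{j,d}$ and let $P = \sum_{j=0}^d c_{j,d}\binom{d}{j} x^j y^{d-j}$ be its homogenisation. The plan has three steps: match the invariants of $p$ and $P$, relate the Bernstein degrees of $p$ to the P\'olya exponent of $P$, and then apply Theorem \ref{thm: effectivePolya}.

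\textbf{Matching the invariants.} The map $t \mapsto (t, 1-t)$ is a bijection from $[0,1]$ onto $\Delta^1$, and $P(t, 1-t) = p(t)$ because $b_{j,d}(t) = \binom{d}{j}t^j(1-t)^{d-j}$. Hence $\lambda(P) = \inf_{[0,1]} p > 0$, and $P$ is strictly positive on $\Delta^1$. The coefficient of $x^j y^{d-j}$ in $P$ is $c_{j,d}\binom{d}{j}$, so dividing by the multinomial coefficient $\binom{d}{(j,d-j)} = \binom{d}{j}$ gives $L(P) = \max_j |c_{j,d}| = L(p)$.

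\textbf{Relating the Bernstein degree to $\pi(P)$.} This is the one point that needs a short check. For $N \ge d$, multiply $P$ by $(x+y)^{N-d}$ and substitute $x = t$, $y = 1-t$. Since $x + y = 1$ on this line, $(x+y)^{N-d}P$ still restricts to $p$. Writing the product as $\sum_j a_j x^j y^{N-j}$, we get $p = \sum_j (a_j/\binom{N}{j})\, b_{j,N}$. The $b_{j,N}$ form a basis, so $[b_{j,N}](p) = a_j/\binom{N}{j}$. Therefore the degree-$N$ Bernstein coefficients are all strictly positive exactly when $(x+y)^{N-d}P$ has strictly positive coefficients. This gives $r_{>0}(p) = \pi(P) + d$, as the paper notes; the infimum in the definition of $\pi(P)$ is attained because $\pi(P)$ is finite by Theorem \ref{thm: PolyaPositivstellensatz}.

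\textbf{Concluding.} Theorem \ref{thm: effectivePolya} with $n = 1$ gives $\pi(P) \le \lfloor d\, L(P)/\lambda(P) \rfloor$. Substituting the identifications above yields the upper bound in \eqref{eq: bernsteinAsym}. The inequality $r(p) \le r_{>0}(p)$ is immediate: the least $N$ at which all coefficients are strictly positive is in particular an $N$ at which they are all nonnegative, so the least nonnegative $N$ can be no larger.
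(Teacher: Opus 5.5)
Your proposal is correct and follows the paper's own route: specialise Theorem~\ref{thm: effectivePolya} to $n = 1$ and dehomogenise. You also write out the identifications the paper only asserts, namely $L(P) = L(p)$, $\lambda(P) = \inf_{[0,1]} p$, and $r_{>0}(p) = \pi(P) + d$ via $[b_{j,N}](p) = [x^j y^{N-j}]\bigl((x+y)^{N-d}P\bigr)/\binom{N}{j}$.
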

Asymptotically, \eqref{eq: bernsteinAsym} gives
\begin{equation} \label{eq: BernsteinAsmyptotics}
r_{> 0}(p) \in O(d) \frac{L(p)}{\inf_{[0, 1]}p} \quad \text{ as $d \to \infty$}.
\end{equation}
Naturally, setting $n = 1$ in de Klerk-Laurent-Parrilo's upper bound \eqref{eq: KLP} of the P\'olya exponent and dehomogenising also gives 
\begin{equation}
r_{> 0}(p) \in 
O(d^2) \frac{\max_{j =0, \dots, d} [b_{j,d}](p)}{\inf_{[0,1]}p}.
\end{equation}
Upper bounds of the Bernstein degree of univariate polynomials positive on $[0, 1]$ of arbitrary degree have been studied since Powers-Reznick \cite{PowersReznick2000}. 

The following one-parameter family of degree-$d$ univariate polynomials shows that the linear dependence on $d$ in \eqref{eq: BernsteinAsmyptotics} is tight. 

\begin{example} \label{eg: BernsteinCounter}
Fix $d \ge 1$. For $0 < \varepsilon < 1$, consider the following polynomial of degree $d$: 
\begin{equation*}
p_{\varepsilon} := 1 + \varepsilon x^d = \sum_{j = 0}^d (1 + \delta_j^d \varepsilon) b_{j,d}, 
\end{equation*}
where $\delta_j^d$ is the Kronecker Delta which is equal to $1$ when $j = d$ and otherwise is equal to $0$. Already, the degree-$d$ Bernstein coefficients of $p_\varepsilon$ are strictly positive, so $r_{>0}(p) = d$. Note also that $\lambda(p_\varepsilon) = 1$ and $L(p_{\varepsilon}) = 1 + \varepsilon$.

Now suppose that $C(d) > 0$ is a constant depending on $d$ such that 
\begin{equation}
r_{> 0}(p) \le C(d) \cdot \frac{L(p)}{\lambda(p)} \quad \text{ for all $p > 0$ on $[0, 1]$ of degree $d$.}
\end{equation}
Substituting $p = p_\varepsilon$ gives $d \le C(d) \cdot \frac{1 + \varepsilon}{1}$. Letting $\varepsilon \searrow 0$ gives $d \le C(d)$. Therefore
\begin{equation*}
C(d) = \Omega(d). \tag*{\qed}
\end{equation*}

\end{example}

\section{Concluding remarks}

Our exposition above has been deliberately thorough with an emphasis on conveying the intuition behind the estimates in this paper. We have avoided concision and hope that the reader learns more about this probabilistic interpretation of P\'olya's Positivstellensatz due to de Klerk-Laurent-Sun. In this AI era, merely rushing to define all terminology and then stating the main result does not benefit the reader any more than compiling a formal proof, for a sufficiently advanced AI would be able to one-shot any main results stated in the introduction section.

Apology aside, the reader not familiar with this subtopic in computational real algebraic geometry might wonder why our new bound in Theorem \ref{thm: effectivePolya}, different from that of de Klerk-Laurent-Parrilo in \eqref{eq: KLP}, is possible. Essentially, the multinomial approximation of the multivariate hypergeometric distribution is the estimate:
\begin{equation} \label{eq: Bernstein estimator}
\frac{1}{\binom{N}{\chi}}[x^\chi]((x_0 + \dots + x_n)^{N - d} P) 
 \approx P(\chi/N), 
\end{equation}
while P\'olya's estimate was 
\begin{equation} \label{eq: Polya estimator}
\frac{1}{\binom{N}{\chi}}[x^\chi]((x_0 + \dots + x_n)^{N - d} P) 
 \approx \frac{N^d}{N(N - 1) \cdots (N - d+ 1)} P(\chi/N). 
\end{equation}
Note, however that $N^d \sim N(N-1) \cdots (N - d + 1)$ as $N \to \infty$, for $d$ fixed, thus the estimators in the righthand sides of \eqref{eq: Bernstein estimator} and \eqref{eq: Polya estimator} are asymptotically equivalent. de Klerk-Laurent-Parrilo's effectivisation of P\'olya's estimate in \eqref{eq: KLP} leads to their upper bound \eqref{eq: KLP} of the P\'olya exponent, while our effectivisation of \eqref{eq: Bernstein estimator} leads to our alternative upper bound in Theorem \ref{thm: effectivePolya}.

A final historical remark is in order. When $n = 1$, the dehomogenisation of \eqref{eq: Bernstein estimator} was in fact the estimate Bernstein himself used in \cite{Bernstein1915}. Thus this paper comes full circle.

\paragraph{Open problems.} Two problems present immediately. First, what is the smallest constant $C > 0$ such that $r(p) \le C d L(p)/\inf_{[0, 1]} p$ for all $p \in \R[x]$ strictly positive on $[0, 1]$ (c.f.\ \eqref{eq: BernsteinAsmyptotics})? Example \ref{eg: BernsteinCounter} and Theorem \ref{thm: effectivePolya} constrain $1 \le C \le 2$ for the best $C$. 

Second, for fixed $n$, Theorem \ref{thm: effectivePolya} shows that $\pi(P) \in n^{1/2} O(d) L(P)/\lambda(P)$ for all $(n + 1)$-ary forms $P$ strictly positive on $\Delta^n$. Can the exponent $1/2$ of $n$ be lowered?

\paragraph{Acknowledgements and AI declaration.} 

We thank 
Dr Lampros Gavalakis,
Professor Adrian Roellin, 
Professor Rongfeng Sun,
Professor Wing-Keung To for helpful discussions and clarifications on matters in probability theory. 
The second author would also like to thank Ze Kang Tan, his former student, who worked on the effective P\'olya's Positivstellensatz over the $1$-simplex \cite{TanZeKang2018}. His work led us to conjecture the $O(d)$ upper bound.

We used ChatGPT-6 Astra during the research process.


\end{document}